\documentclass[preprint,12pt]{elsarticle}
\usepackage[utf8]{inputenc}
\usepackage[T1]{fontenc}
\usepackage[all,xdvi,2cell]{xy}\UseAllTwocells\SilentMatrices
\usepackage{amsmath}
\usepackage{amsfonts}
\usepackage{amsthm}
\usepackage{enumerate}
\usepackage{newunicodechar}
\newtheorem{theorem}{Theorem}[section]

\newtheorem{proposition}[theorem]{Proposition}

\theoremstyle{definition}
\newtheorem{definition}[theorem]{Definition}
\newtheorem{example}[theorem]{Example}
\newunicodechar{∖}{\backslash}
\newunicodechar{≤}{\leq}
\newunicodechar{α}{\alpha}
\newunicodechar{β}{\beta}

\newcommand{\id}{\mathrm{id}}
\newcommand{\card}{\mathrm{card}}
\newcommand{\C}{\mathcal{C}}
\newcommand{\D}{\mathcal{D}}
\newcommand{\newcategory}[1]{\expandafter\newcommand\csname #1\endcsname{\mathbf{#1}}}
\newcategory{Set}
\newcategory{BPos}
\newcategory{PsDPos}
\newcategory{Pos}
\newcategory{PsEA}
\newenvironment{acknowledgements}{\par\noindent\textbf{Acknowledgements:}}{}
\begin{document}
\journal{Fuzzy Sets and Systems}
\begin{frontmatter}
\title{Pseudo effect algebras are algebras over bounded posets}
\author{Gejza Jenča}
\address{
Department of Mathematics and Descriptive Geometry\\
Faculty of Civil Engineering,
Slovak University of Technology,
	Slovak Republic\\
              \texttt{gejza.jenca@stuba.sk}
}
\begin{abstract}
We prove that there is a monadic adjunction between the category of bounded posets
and the category of pseudo effect algebras. 
\end{abstract}
\begin{keyword}
pseudo effect algebras \sep pseudo D-posets
\end{keyword}
\end{frontmatter}

\section{Introduction}

In their 1994 paper~\cite{FouBen:EAaUQL}, D.J. Foulis and M.K. Bennett defined
effect algebras as (at that point in time) the most general version of
quantum logics. Their motivating example was the set of all Hilbert space
effects, a notion that plays an important role in quantum mechanics
\cite{Lud:FoQM,BusLahMit:TQToM}. An equivalent definition in terms of the difference
operation was independently given by F. Kôpka and F. Chovanec in~\cite{KopCho:DP}. Later
it turned out that both groups of authors rediscovered the definition given already in 1989
by R. Giuntini and H. Greuling in~\cite{GiuGre:TaFLfUP}.

By the very definition, the class of effect algebras
includes orthoalgebras~\cite{FouGreRut:FaSiO}, which include 
orthomodular posets and orthomodular lattices. It soon turned out~\cite{ChoKop:BDP}
that there is another interesting subclass of effect algebras, namely MV-algebras
defined by C.C. Chang in 1958~\cite{Cha:AAoMVL} to give the algebraic semantics
of the Łukasiewicz logic. Furthermore, K. Ravindran in his thesis~\cite{Rav:OaSToEA} proved that a
certain subclass of effect algebras (effect algebras with the Riesz decomposition property)
is equivalent with the class of partially ordered abelian groups with interpolation~\cite{Goo:POAGwI}.
This result generalizes the equivalence of MV-algebras and lattice ordered abelian groups described by
D. Mundici in~\cite{Mun:IoAFCSAiLSC}.

In \cite{kalmbach1977orthomodular}, Kalmbach proved the following theorem.
\begin{theorem}\label{thm:kalmbach}
Every bounded lattice $L$ can be embedded into an orthomodular lattice $K(L)$.
\end{theorem}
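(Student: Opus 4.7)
The plan is to carry out Kalmbach's explicit construction. I would represent each element of $K(L)$ by a finite weakly increasing sequence $s=(a_1,\ldots,a_{2n})$ of elements of $L$ of even length $2n\geq 0$, modulo the equivalence generated by insertion and deletion of consecutive equal entries. Such a sequence is to be visualised as the formal union of disjoint intervals $[a_1,a_2]\cup[a_3,a_4]\cup\cdots\cup[a_{2n-1},a_{2n}]$. Take the empty sequence to represent $0$, take $(0,1)$ to represent $1$, and define the orthocomplement by the shift $s^{\perp}=(0,a_1,a_2,\ldots,a_{2n},1)$, which corresponds to the complementary union of intervals inside $[0,1]$.

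The first technical step is to show that any two representatives admit a common refinement, so that operations and relations can be compared on sequences of equal length with the same break points. One then declares $s\leq t$ to mean that every positive interval of $s$ is contained in a positive interval of $t$, and verifies that $K(L)$ is a bounded lattice with $0$ and $1$ as above, in which ${}^{\perp}$ is an order-reversing involution satisfying $s\wedge s^{\perp}=0$ and $s\vee s^{\perp}=1$. Nothing here is deep; the work lies in handling the equivalence carefully.

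The heart of the argument, and the main obstacle, is orthomodularity: given $x\leq y$ in $K(L)$, one must show $y=x\vee(y\wedge x^{\perp})$. I would induct on the number of break points in a common refinement of $x$ and $y$. The base case, where both $x$ and $y$ are single intervals $[a,b]\subseteq[a',b']$, reduces to an elementary calculation in the bounded lattice $L$, since the orthocomplement of $[a,b]$ inside $[a',b']$ is $[a',a]\cup[b,b']$. The inductive step peels off one interval of $y$ at a time, the bookkeeping being the only delicate point since both $\vee$ and $\wedge$ in $K(L)$ may temporarily increase the number of break points.

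Finally, the embedding $\iota:L\to K(L)$ is given by $\iota(a)=(0,a)$, that is, the single-interval element $[0,a]$. Then $\iota(0)=(0,0)$ is equivalent to the empty sequence and $\iota(1)=(0,1)$ is the top, while $\iota(a)\leq\iota(b)$ iff $[0,a]\subseteq[0,b]$ iff $a\leq b$, so $\iota$ is an order embedding preserving $0$ and $1$. This gives at least an embedding of bounded posets, which is all that will be needed in the sequel.
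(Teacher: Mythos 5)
The paper does not actually prove this theorem: it is quoted from Kalmbach's 1977 paper, and the construction you describe --- even-length weakly increasing sequences read as formal unions of intervals, orthocomplementation by the shift $(a_1,\ldots,a_{2n})\mapsto(0,a_1,\ldots,a_{2n},1)$, and the embedding $a\mapsto[0,a]$ --- is exactly the ``Kalmbach extension'' $K(L)$ that the paper refers to. So your route is the intended one, and the outline is essentially correct; in particular the base case of your orthomodularity induction is the right computation (for $x=[a,b]\leq y=[a',b']$ one gets $y\wedge x^{\perp}=[a',a]\cup[b,b']$, and joining $x$ back on recovers $y$). Two points deserve more care than you give them. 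First, the existence of binary joins and meets in $K(L)$ is not just a matter of ``handling the equivalence carefully'': the union of two interval systems need not have endpoints forming a chain, so the join must be produced by an iterated merging of overlapping intervals using $\vee$ and $\wedge$ in $L$ (this is precisely where the hypothesis that $L$ is a lattice, rather than a mere bounded poset, enters --- for bounded posets one only gets the Mayet--Navara orthomodular \emph{poset}), and one must verify that the result is a least upper bound on equivalence classes. Second, Kalmbach's theorem is normally stated with $\iota$ a lattice embedding (this is what makes it prove that orthomodular lattices satisfy no special lattice equation); your $\iota(a)=[0,a]$ does satisfy $\iota(a)\vee\iota(b)=[0,a\vee b]$ and $\iota(a)\wedge\iota(b)=[0,a\wedge b]$, but you only check the order-embedding property. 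For the purposes of the present paper, which works throughout in $\mathbf{BPos}$, the weaker bounded-poset embedding you establish is indeed all that is needed, as you note.
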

The proof of the theorem is constructive, $K(L)$ is known under the name {\em Kalmbach
extension} or {\em Kalmbach embedding}. In \cite{mayet1995classes}, Mayet and Navara proved
that Theorem \ref{thm:kalmbach} can be generalized: every bounded poset $P$ can be embedded
in an orthomodular poset $K(P)$. 
In fact, as proved by Harding in \cite{harding2004remarks}, this $K$ is then left adjoint to the forgetful
functor from orthomodular posets to bounded posets. This adjunction gives rise to a monad on the category
of bounded posets, which we call the {\em Kalmbach monad}.

For every monad $(T,\eta,\mu)$ on a category $\C$, there is a standard notion {\em Eilenberg-Moore} category $\C^T$
(sometimes called the {\em category of algebras} or the {\em category of modules} for $T$). The category
$\C^T$ comes equipped with a canonical adjunction between $\C$ and $\C^T$ and this adjunction 
gives rise to the original monad $T$ on $\C$. A functor equivalent to a right adjoint $\C^T\to\C$ is called
{\em monadic}.

In~\cite{jenca2015effect} the author proved that the Eilenberg-Moore category for the
Kalmbach monad is isomorphic to the category of effect algebras. In other words, the
forgetful functor from the category of effect algebras to the category of bounded posets
is monadic. 

In~\cite{dvurecenskij2001pseudoeffect}, Dvurečenskij and Vetterlein introduced pseudo effect
algebras, a non-commutative generalization of effect algebras. Many results
known for effect algebras were successfully generalized to the non-commutative case, let us
mention~\cite{dvurevcenskij2003central} generalizing some results from~\cite{Jen:ACBTTfEA,GreFouPul:TCoaEA}
and~\cite{foulis2010typepseudo} generalizing main results of~\cite{foulis2010type}.

In the present paper we continue this line of research. We generalize the main result of~\cite{jenca2015effect}
by proving that the forgetful functor $G$ from the category of pseudo effect algebras to the category of
bounded posets is monadic. Unlike in~\cite{jenca2015effect}, we shall not give an explicit description of the
left adjoint associated with $G$. Since we use Beck's monadicity theorem, the proof we present here
is shorter and simpler than the previous proof that covered only the commutative case.

\section{Preliminaries}
\subsection{Bounded posets}
A {\em bounded poset} is a structure $(P,\leq,0,1)$ such that
$\leq$ is a partial order on $P$ and $0,1\in P$ are the bottom and the top
elements of $(P,\leq)$, respectively.

Let $P_1,P_2$ be bounded posets. A map $\phi:P_1\to P_2$ is a 
{\em morphism of bounded posets} if and only if it satisfies the following
conditions.
\begin{itemize}
\item $\phi(1)=1$ and $\phi(0)=0$.
\item $\phi$ is isotone.
\end{itemize}

The category of bounded posets is denoted by $\BPos$.

\subsection{Pseudo effect algebras}
\begin{definition}\cite{dvurecenskij2001pseudoeffect}
A {\em pseudo effect algebra} is an algebra $A$ with a partial binary
operation $+$ and two constants $0,1$ such that, for all $a,b,c\in A$.
\begin{enumerate}[{(PE}1{)}]
\item If $a+(b+c)$ exists, then $(a+b)+c$ exists and
$a+(b+c)=(a+b)+c$.
\item There is exactly one $d$ and exactly one $e$ such that $a+d=e+a=1$. 
\item If $a+b$ exists, there are $d,e$ such that $d+a=b+e=a+b$.
\item If $a+1$ exists or $1+a$ exists, then $a=0$.
\end{enumerate}
\end{definition}

Every pseudo effect algebra can be equipped with a partial order given by the
rule $a\leq c$ if and only if there exists an element $b$ such that $a+b=c$. In this partial order,
$0$ is the smallest and $1$ is the greatest element, so every
pseudo effect algebra is a bounded poset.
A morphism $f\colon A\to B$ of pseudo effect algebras is a mapping
such that $f(0)=0$, $f(1)=1$ and whenever $a+b$ exists in $A$,
$f(a)+f(b)$ exists in $B$ and $f(a+b)=f(a)+f(b)$. The category
of pseudo effect algebras is denoted by $\PsEA$. Clearly,
every morphism of effect algebras is a morphism of the associated bounded
posets. 
A pseudo effect algebra is an {\em effect algebra}~\cite{FouBen:EAaUQL,GiuGre:TaFLfUP} if and only if it is
commutative.

Every closed interval $[0,u]$ in the positive cone of a partially ordered (not
necessarily abelian) group gives rise to an {\em interval pseudo effect algebra} \cite[Section 2]{dvurecenskij2001pseudoeffect}. It is well-known that the set of all automorphisms of
a poset equipped with composition and a partial order defined pointwise
forms a partially ordered group \cite[Example 1.3.19]{glass1999partially}.
Using these facts, it is easy to construct examples of non-commutative pseudo effect algebras:

\begin{example}
Let $E$ be the set of all strictly increasing functions (in other words, order-automorphisms of the poset 
$\mathbb R$) from $\mathbb R$ to $\mathbb R$ such that,
for all $x\in\mathbb R$, $x\leq f(x)\leq 2x$. Put $\mathbf 0:=\id_{\mathbb R}$ and $\mathbf 1:=(x\mapsto 2x)$.
For $f,g\in E$, define $f+g$ if and only if $f\circ g\in E$ and then put $f+g=f\circ g$. 
Then $(E,+,\mathbf 0,\mathbf 1)$ is a non-commutative pseudo effect algebra.
\end{example}

\subsection{Pseudo D-posets}

For our purposes, the axioms of pseudo effect algebras are not very handy. 
Rather that working with the partial operation $+$, it will be easier to start with 
a bounded poset and then to work with partial differences $/$ and $∖$, 
defined for every pair of comparable elements on a poset. 

\begin{definition}\cite{yun2004pseudo}
A {pseudo D-poset} is a bounded poset $(A,≤,0,1)$ with the smallest element $0$ and the greatest
element $1$, equipped with two partial operations $/$ and $∖$, such that $b/a$ and $b∖a$ are defined
if and only if $a≤b$ and, for all $a,b,c\in A$, the following conditions are satisfied.
\begin{enumerate}[{(PD}1{)}]
\item For any $a\in A$, $a/0=a∖0=a$.
\item If $a≤b≤c$, then $c/b≤c/a$ and $c∖b≤c∖a$, and we have
$(c/a)∖(c/b)=b/a$ and $(c∖a)/(c∖b)=b∖a$.
\end{enumerate}
\end{definition}

A pseudo D-poset is a {\em D-poset}~\cite{KopCho:DP} iff the partial operations $/$ and $∖$ coincide.

A morphism of pseudo D-posets is a morphism of bounded posets $f\colon A\to B$
such that, for all $a,b\in A$, $f(a/b)=f(a)/f(b)$ and $f(a∖b)=f(a)∖f(b)$. The category
of pseudo D-posets is denoted by $\PsDPos$. Clearly, there is a forgetful functor 
$G\colon\PsDPos\to\BPos$.

Let $A$ be a pseudo D-poset. A subset $B\subseteq A$ is a {\em subalgebra of $A$} if and only if
$0,1\in A$ and for all $a,b\in B$ such that $a\leq b$, the elements $a/b$ and $a∖b$ belong to $B$.

Generalizing the well-known fact that every effect algebra is a D-poset
and vice versa, it was proved by Yun, Yongmin, and Maoyin in~\cite{yun2004pseudo} that every pseudo effect algebra
is equivalent to a pseudo D-poset. Explicitly, if $(A,∖,/)$ is a pseudo D-poset, then
we may define a partial binary operation $+$ on $A$ so that $a+b$ is defined and equals $c$ if and only if
$b\leq c$ and $c/b=a$ if and only if $a\leq c$ and $c∖a=b$. Then $(A,+,0,1)$ is an effect algebra. On the other
hand, whenever $(A,+,0,1)$ is a pseudo effect algebra then for every $a\leq c$ we may
define $c/a$ and $c∖a$ by the rule $a+(c/a)=(c∖a)+a=c$. Moreover, these two constructions
are mutually inverse. That means that the categories $\PsEA$ and $\PsDPos$ are isomorphic.

\begin{proposition}\label{prop:PsDPosiscomplete}
The category $\PsDPos$ is small-complete.
\end{proposition}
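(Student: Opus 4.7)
My plan is to invoke the standard fact that a category is small-complete as soon as it has all small products and all equalizers, and then to construct these two kinds of limits directly in $\PsDPos$ by equipping the obvious underlying set with componentwise (respectively restricted) structure.

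First I would handle products. Given a small family $(A_i)_{i \in I}$ of pseudo D-posets, I would take the underlying set of the product to be the Cartesian product $\prod_{i \in I} A_i$, with $0$ and $1$ the constant tuples, the order defined componentwise, and the partial operations $/$ and $∖$ defined componentwise, with $(a_i)/(b_i)$ declared to exist precisely when $b_i \leq a_i$ holds for every $i$, i.e.\ precisely when $(b_i) \leq (a_i)$. Axiom (PD1) is immediate from its coordinatewise version, and (PD2) similarly reduces to its validity in each factor, since the identities involved are quantifier-free and only use terms that are guaranteed to be defined by the hypothesis $a \leq b \leq c$. The projection maps are morphisms of pseudo D-posets essentially by construction, and any cone of morphisms from a pseudo D-poset $X$ into the $A_i$'s assembles into a unique tuple-valued map whose compatibility with $/$ and $∖$ is again componentwise, establishing the universal property.

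Next I would treat equalizers. Given parallel morphisms $f, g \colon A \to B$ in $\PsDPos$, I would let $E = \{a \in A : f(a) = g(a)\}$ and equip $E$ with the order and partial operations inherited from $A$. Since both $f$ and $g$ preserve $0$, $1$ and both partial operations, the set $E$ contains $0, 1$ and is closed under $/$ and $∖$ in the sense of the definition of subalgebra given in the excerpt; in particular $E$ inherits a pseudo D-poset structure from $A$, and the inclusion $E \hookrightarrow A$ is a morphism. The universal property of the equalizer then follows from the corresponding universal property in $\Set$, since any morphism $h\colon X \to A$ in $\PsDPos$ equalizing $f$ and $g$ factors uniquely through the set $E$ and this factorization automatically preserves the partial operations because $h$ does.

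Combining products and equalizers, I obtain small-completeness by the standard construction of an arbitrary small limit as an equalizer of two maps between products indexed by the objects and the arrows of the diagram. I do not expect any serious obstacle here: the only point that requires a little attention is checking that the domain of definition of the partial operations in the constructed product is exactly the set of comparable pairs (so that the structure is a genuine pseudo D-poset and not merely a bounded poset with partially defined extra operations), and that on the equalizer the induced operations are again defined on precisely the comparable pairs, which is automatic from the fact that $E$ is a subalgebra of $A$.
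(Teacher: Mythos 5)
Your proposal is correct and follows exactly the same route as the paper: products are built on the product of underlying bounded posets with $/$ and $∖$ defined componentwise, equalizers are the subalgebras $E=\{x\in A: f(x)=g(x)\}$, and small-completeness follows from having all products and equalizers. You simply spell out the verifications that the paper leaves as ``easy to check.''
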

\begin{proof}
It is easy to check that a product of every family of pseudo D-posets can be constructed as a product of
underlying bounded posets, the partial operation $∖$ and $/$ are then defined pointwise. For a parallel pair of
morphisms $f,g\colon A\to B$ in $\PsDPos$, their equalizer is the inclusion of a subalgebra $E=\{x\in A:f(x)=g(x)\}$
into $A$. Since $\PsDPos$ has all products and all equalizers, it has all small limits.
\end{proof}

\subsection{General adjoint functor theorem}

Adjoint functor theorems give conditions under which a continuous functor $G$
has a left adjoint $F$.  This allows us to avoid construction of the functor
$F$, which is sometimes a difficult endeavor.

\begin{theorem}\cite{freyd1964abelian}\cite[Theorem~V.6.2]{mac1998categories}
\label{thm:gaft}
Given a locally small, small-complete category $\D$, a functor
$G\colon\D\to\C$ is a right adjoint if and only if $G$ preserves
small limits and satisfies the following 

\underline{Solution Set Condition:} for each object $X$ of $\C$ there is a set $I$
and an $I$-indexed family of arrows $h_i\colon X\to G(A_i)$ such that
every arrow $h\colon X\to G(A)$ can be written as a composite
$h=G(j)\circ h_i$ for some $j\colon A_j\to A$. 
\end{theorem}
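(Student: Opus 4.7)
The plan is to prove both directions of the equivalence. The forward implication is routine: a right adjoint always preserves small limits, and for the solution set condition one may take $I$ a singleton with $h_{*} = \eta_X \colon X \to GF(X)$, since every arrow $h \colon X \to G(A)$ factors as $G(\tilde h) \circ \eta_X$ where $\tilde h \colon F(X) \to A$ is the adjoint transpose of $h$.

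For the converse I would show that for each object $X$ of $\C$ the comma category $(X \downarrow G)$ has an initial object; this object, together with its distinguished morphism from $X$, then supplies $F(X)$ and the component $\eta_X$ of the unit, after which $F$ extends to a functor and the adjunction follows formally from initiality. First I would check that $(X \downarrow G)$ is small-complete: the limit of a diagram is computed as the limit of its projection to $\D$, equipped with the unique morphism from $X$ obtained by the universal property of the limit in $\D$, which applies precisely because $G$ preserves small limits. Reread in this comma category, the solution set condition says exactly that $\{(A_i, h_i)\}_{i\in I}$ is a \emph{weakly initial} family in $(X \downarrow G)$, that is, every object admits at least one morphism from some $(A_i, h_i)$.

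The heart of the argument is then the classical trick for promoting a weakly initial set to a genuine initial object. Form the product $P = \prod_{i\in I} (A_i, h_i)$ in $(X \downarrow G)$; it remains weakly initial. Then take the joint equalizer $\iota \colon E \hookrightarrow P$ of \emph{all} endomorphisms of $P$; local smallness of $\D$ is essential here so that this collection is really a set, and small-completeness provides the (possibly large) equalizer. Existence of a morphism $E \to (A, h)$ for any object is immediate, by composing $\iota$ with a witness $P \to (A, h)$ of weak initiality. For uniqueness, given parallel $f, g \colon E \rightrightarrows (A, h)$, one forms their equalizer $j \colon E' \hookrightarrow E$, uses weak initiality to get $p \colon P \to E'$, and observes that $\iota j p \colon P \to P$ is an endomorphism of $P$, whence $\iota j p \iota = \iota$. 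Since $\iota$ is monic, $j p \iota = \id_E$, so $j$ is simultaneously split epic and monic, hence an isomorphism, forcing $f = g$. This final categorical bookkeeping, and verifying that one may indeed form the needed equalizer of a set-indexed family of endomorphisms, is the step I expect to require the most care.
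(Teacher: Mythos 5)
The paper does not actually prove this theorem---it is quoted with citations to Freyd and to Mac Lane (Theorem~V.6.2)---and your argument is precisely the standard proof from that reference: reduce to producing an initial object in each comma category $(X\downarrow G)$, observe that small-completeness of $\D$ together with $G$ preserving small limits makes $(X\downarrow G)$ small-complete and locally small while the solution set becomes a weakly initial family, and then promote the weakly initial set to an initial object by taking the product and equalizing all of its endomorphisms. Your write-up is correct, including the delicate uniqueness step where $\iota jp\iota=\iota$ and monicity of $\iota$ force the equalizer $j$ to be a split-epi mono, hence an isomorphism.
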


\subsection{Beck's monadicity theorem}

A functor $G\colon\D\to\C$ is {\em monadic} if and only if it is equivalent
to the forgetful functor from the category of algebras $\C^T$ to $\C$ for a monad $T$ on $\C$.
A colimit (or a limit) in a category $\C$ is {\em absolute} if and only if it is preserved by every
functor with domain $\C$.

\begin{theorem}\cite{beck1967triples}\cite[Theorem~VI.7.1]{mac1998categories}
\label{thm:beckmonadicity}
A functor $G\colon\D\to\C$ is monadic if and only if $G$ is a right adjoint
and $G$ creates coequalizers for those parallel pairs $f,g\colon A\to B$
in $\D$, for which
$$
\xymatrix{
G(A)
	\ar@<.5ex>[r]^{G(f)}
	\ar@<-.5ex>[r]_{G(g)}
&
G(B)
}
$$
has an absolute coequalizer in $\C$.
\end{theorem}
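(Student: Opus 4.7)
The plan is to treat the two directions separately. The ``only if'' direction is essentially formal: if $G$ is (equivalent to) the forgetful functor $\C^T\to\C$ for some monad $T$ on $\C$, then the free-algebra functor is its left adjoint, and one checks directly from the definition of a $T$-algebra that $G$ creates any coequalizer in $\C$ of the indicated type (given an absolute coequalizer $q$ of $G(f),G(g)$ downstairs, apply $T$ to obtain $Tq$ as coequalizer of $Tf,Tg$, then factor the structure map through it and verify the $T$-algebra axioms and the universal property).

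For the substantial ``if'' direction, I would pick a left adjoint $F\dashv G$ with unit $\eta$ and counit $\varepsilon$, set $T=(GF,\eta,G\varepsilon F)$, and form the canonical comparison functor $K\colon\D\to\C^T$ sending $A$ to $(G(A),G\varepsilon_A)$. The strategy is to construct an explicit quasi-inverse $L\colon\C^T\to\D$. For a $T$-algebra $(X,h)$, consider in $\D$ the reflexive parallel pair
$$
\xymatrix@C=4em{
FGFX
 \ar@<.6ex>[r]^-{\varepsilon_{FX}}
 \ar@<-.6ex>[r]_-{F(h)}
&
FX.
}
$$
Its image under $G$ is a split, hence absolute, coequalizer in $\C$ with coequalizing arrow $h\colon GFX\to X$: the splittings $\eta_X$ and $\eta_{GFX}$ satisfy the required identities because of the $T$-algebra axiom $h\circ\eta_X=\id_X$, the monad unit law $G\varepsilon_{FX}\circ\eta_{GFX}=\id_{GFX}$, and the naturality square $GFh\circ\eta_{GFX}=\eta_X\circ h$. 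The creation hypothesis then yields a unique object $L(X,h)\in\D$ with $GL(X,h)=X$ and $G$ of the coequalizing arrow equal to $h$.

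Functoriality of $L$ is automatic: a morphism of $T$-algebras $(X,h)\to(X',h')$ gives, after applying $F$ and using the algebra condition, a morphism of the parallel pairs in $\D$, and the universal property of the coequalizer induces a unique arrow $L(X,h)\to L(X',h')$. The natural isomorphisms $LK\cong\id_\D$ and $KL\cong\id_{\C^T}$ are then diagram chases: the first uses that $\varepsilon_A\colon FGA\to A$ is itself the coequalizer in $\D$ of $\varepsilon_{FGA}$ and $FG\varepsilon_A$ (again by the split-coequalizer argument applied to the free algebra $K(A)$), so uniqueness forces $LK(A)\cong A$; the second is nearly tautological, since $L$ was defined so as to have the required underlying object and action.

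I expect the main obstacle to be the verification that the pair $F(h),\varepsilon_{FX}$ becomes a split coequalizer under $G$. Each of the three equations needed is a triangle identity, a naturality square, or a $T$-algebra axiom, but choosing the right splittings and fitting the axioms together in the correct variance requires careful bookkeeping; this is the conceptual heart of the theorem, since it is precisely this step that turns the abstract creation hypothesis into a usable construction of $L$. Everything else is soft: uniqueness of absolute coequalizers and the universal property take care of the remaining verifications.
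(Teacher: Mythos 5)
The paper does not actually prove this statement: Beck's monadicity theorem is quoted as a classical result, with citations to Beck's thesis and to Mac Lane, Theorem~VI.7.1, so there is no in-paper proof to compare against. Your sketch reproduces the standard argument from the cited source --- the split fork $\varepsilon_{FX}, F(h)\colon FGFX\to FX$ with splittings $\eta_X$ and $\eta_{GFX}$, creation of its coequalizer to define $L$, and the comparison functor $K$ --- and it is correct in outline; the identities you list are exactly the split-coequalizer equations (together with $h\circ G\varepsilon_{FX}=h\circ GF(h)$, the associativity axiom, which gives the fork condition). The one place you undersell the work is $KL\cong\id_{\C^T}$: showing that the created object carries the structure map $h$, i.e.\ that $G\varepsilon_{L(X,h)}=h$, is not tautological --- it follows, for instance, from naturality of $\varepsilon$ at the created coequalizer $e$ together with the fact that $F(h)$ is a split epimorphism, forcing $\varepsilon_{L(X,h)}=e$. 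A final pedantic caveat: with the strict notion of ``creates'' the ``only if'' direction is literally true for the forgetful functor $\C^T\to\C$ itself, and only up to isomorphism for a functor merely equivalent to it, which is the sense in which the paper defines ``monadic''; this is a standard imprecision in how the theorem is quoted and does not affect the direction the paper actually uses.
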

Beck's monadicity theorem is a device that allows us to prove that a functor is monadic without
having to explicitly describe the monad $T$ on $\C$ arising from the adjunction, 
describe its category of algebras $\C^T$ and to prove that $\C^T$ is equivalent to $\D$.

\section{The result}

\begin{theorem}
The forgetful functor $G\colon\PsDPos\to\BPos$ is monadic.
\end{theorem}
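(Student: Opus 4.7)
The plan is to apply Beck's monadicity theorem (Theorem \ref{thm:beckmonadicity}). This requires verifying two things: that $G$ is a right adjoint, and that $G$ creates coequalizers of every parallel pair $f,g\colon A\to B$ in $\PsDPos$ whose image $G(f),G(g)$ admits an absolute coequalizer in $\BPos$.

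For the first, I would invoke the general adjoint functor theorem (Theorem \ref{thm:gaft}). The category $\PsDPos$ is small-complete by Proposition \ref{prop:PsDPosiscomplete}, and inspection of the constructions given there shows that $G$ preserves products and equalizers, hence all small limits. For the solution set condition, given a bounded poset $X$ and a morphism $h\colon X\to G(A)$, the pseudo D-subalgebra of $A$ generated by $h(X)\cup\{0,1\}$ has cardinality bounded by a function of $\card(X)$ alone (obtained by iteratively closing under the binary partial operations $/$ and $\setminus$). Taking a set of representatives, one per isomorphism class of pseudo D-poset so generated together with a chosen map from $X$, yields a solution set at $X$.

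For the second, let $f,g\colon A\to B$ be a parallel pair in $\PsDPos$ such that $G(f),G(g)$ has an absolute coequalizer $q\colon G(B)\to Q$ in $\BPos$. Every absolute coequalizer is split, so there are bounded-poset morphisms $s\colon Q\to G(B)$ and $t\colon G(B)\to G(A)$ satisfying $qs=\id_Q$, $G(f)t=\id_{G(B)}$, and $sq=G(g)t$. I would lift the pseudo D-poset structure to $Q$ by setting, for $\alpha\leq\beta$ in $Q$,
\[
\beta/\alpha:=q\bigl(s(\beta)/s(\alpha)\bigr),\qquad \beta\setminus\alpha:=q\bigl(s(\beta)\setminus s(\alpha)\bigr),
\]
which makes sense since $s$ is isotone. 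The crux is to show $q\colon B\to Q$ is then a morphism of pseudo D-posets: by the splitting identities,
$q(x)/q(y)=q(sq(x)/sq(y))=q(g(t(x))/g(t(y)))=q(g(t(x)/t(y)))=q(f(t(x)/t(y)))=q(x/y)$,
using in turn the definition of $/$ on $Q$, $sq=gt$, preservation of $/$ by $g$, $qf=qg$, and preservation of $/$ by $f$ together with $ft=\id$. Once this is established, axioms (PD1) and (PD2) for $Q$ reduce to those for $B$: for any comparable triple $\alpha\leq\beta\leq\gamma$ in $Q$, the triple $s(\alpha)\leq s(\beta)\leq s(\gamma)$ in $B$ is a preimage, and applying $q$ to the axiom in $B$ yields the axiom in $Q$.

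To finish, I would check the universal property: any morphism $h\colon B\to C$ in $\PsDPos$ coequalising $f,g$ factors uniquely through $q$ in $\BPos$ via some $\bar h\colon Q\to G(C)$, and $\bar h$ preserves $/$ and $\setminus$ by the explicit formula for the operations on $Q$ together with the fact that $h$ does. Uniqueness of the lifted structure is automatic: any pseudo D-poset structure on $Q$ making $q$ a morphism must satisfy $\beta/\alpha=q(s(\beta))/q(s(\alpha))=q(s(\beta)/s(\alpha))$, which is our definition. The main obstacle is the careful bookkeeping with the three splitting identities in combination with the fact that $f$ and $g$ are pseudo D-poset morphisms; this is precisely what forces the partial operations on $Q$ to be well-defined and compatible with $q$. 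Once that chase goes through, the axioms on $Q$ and the universal property fall out by surjectivity of $q$.
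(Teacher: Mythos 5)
Your overall architecture (the general adjoint functor theorem for the right adjoint, then Beck's theorem for creation of coequalizers) matches the paper's, and the first half of your argument is fine. The second half, however, rests on the assertion that \emph{every absolute coequalizer is split}, and that assertion is false: split coequalizers are absolute, but the converse fails (Par\'e's characterization of absolute coequalizers involves contractibility data of arbitrary finite length, and there exist absolute coequalizers admitting no splitting $s,t$ with $qs=\id$, $ft=\id$, $gt=sq$). This is not cosmetic for you, because everything downstream --- the compatibility of $\beta/\alpha:=q\bigl(s(\beta)/s(\alpha)\bigr)$ with $q$, the element chase showing $q(x)/q(y)=q(x/y)$, and the reduction of (PD1)--(PD2) to $B$ --- uses the map $t$ and the identities $ft=\id_{G(B)}$ and $sq=gt$ in an essential way. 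For a general absolute coequalizer you do get a section $s$ of $q$ (apply the hom-functor out of $Q$ to the fork), but not the rest of the splitting, so your key computation collapses.

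There are two ways to repair this. The cheap one is to invoke the other standard form of Beck's theorem: $G$ is monadic iff it is a right adjoint and creates coequalizers of $G$-split pairs; then the splitting is part of the hypothesis and your computations go through verbatim. The paper instead keeps the absolute-coequalizer form and avoids splittings altogether: it packages $/$ and $\setminus$ as natural transformations $IUG\Rightarrow UG$, where $I$ is the interval functor sending $P$ to its poset of comparable pairs $[a\leq b]$, observes that absoluteness makes both $IU(q)$ and $U(q)$ coequalizers in $\Pos$, obtains $/\colon IU(Q)\to U(Q)$ from the universal property of the top coequalizer, and derives (PD1) and (PD2) from the uniqueness of maps out of coequalizers rather than from element computations. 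Your route, once anchored to the split-pair version of the monadicity theorem, is more explicit and elementary; the paper's route is the one that actually works with the theorem in the form stated.
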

\begin{proof}

Let us apply Theorem~\ref{thm:gaft} to prove $G$ is a right adjoint. By Proposition~\ref{prop:PsDPosiscomplete},
$\PsDPos$ is small-complete.
It is easy to check that $G$ preserves all small limits, since pseudo D-posets are algebraic structures;
the partiality of the operations is not a problem here. Let us check the Solution
Set Condition. Let $P$ be a bounded poset. Let $\mathcal W_P$ be a set of bounded posets
such that for every bounded poset $W'$ with 
$\card(P)\leq\card(W')\leq \max(\card(P),\aleph_0)$, there is a $W\in\mathcal W_P$ such that
$W$ is isomorphic to $W'$. Consider
the family $\mathcal H_P=\{h_i\}_{i\in I}$ of all $\BPos$-morphisms $h_i\colon P\to G(A_i)$, where $A_i$ is a 
pseudo D-poset and $G(A_i)\in\mathcal Q_P$. For every $\BPos$-morphism
$h\colon P\to G(A)$, the cardinality of the subalgebra $B$ of $A$ that is generated by the range of $h$
is bounded below by $\card(P)$ and above by $\max(\card(P),\aleph_0)$. Write $j\colon B\to A$
for the embedding of the subalgebra $B$ into $A$. Clearly, $h=G(j)\circ h_i$ for some
$h_i\in M$. Since $G$ preserves small limits and the Solution Set Condition is satisfied,
$G$ is a right adjoint.

We have proved that $G$ is a right adjoint, so we may apply Theorem~\ref{thm:beckmonadicity}.
Let $A,B$ be pseudo D-posets, let $f,g\colon A\to B$ be morphisms of
pseudo D-posets. Suppose that
\begin{equation}
\label{diag:absolute}
\xymatrix{
G(A)
	\ar@<.5ex>[r]^{G(f)}
	\ar@<-.5ex>[r]_{G(g)}
&
G(B)
	\ar[r]^q
&
Q
}
\end{equation}
is an absolute coequalizer. Assuming this, we need to prove that
there is a unique morphism of pseudo D-posets $q'\colon B\to Q'$ such that
\begin{equation}
\xymatrix{
A
	\ar@<.5ex>[r]^{f}
	\ar@<-.5ex>[r]_{g}
&
B
	\ar[r]^{q'}
&
Q'
}
\end{equation}
is a coequalizer in $\PsDPos$ and $Q=G(Q')$, $q=G(q')$. Let us 
prove that such $q'$ exists. We use the fact that~\eqref{diag:absolute}
is an absolute coequalizer to equip the bounded poset $Q$ with a structure of a pseudo D-poset. 
Then we prove that $q$ comes from 
a morphism of pseudo D-posets. Finally, we prove that this morphism of
pseudo D-posets is a coequalizer of $f,g$ in $\PsDPos$.

For every poset $P$, let us write $I(P)$ for the set of comparable pairs $\{(a,b)\in P\times P\colon a≤b\}$ and partially
order $I(P)$ by the rule
$(a,b)≤(c,d)$ if and only if $c≤a≤b≤d$. Note that the elements of
$I(P)$ can be identified with closed intervals of $P$, ordered by inclusion.
We shall write $[a≤b]$ for the element $(a,b)$ of $I(P)$.
The construction $P\mapsto I(P)$ can be made into a functor $\Pos\to\Pos$ by the rule
$I(f)([a≤b])=[f(a)≤f(b)]$.

In what follows, we write $U\colon\BPos\to\Pos$ for the obviously defined forgetful functor
from bounded posets to posets.
Note that, for every pseudo D-poset $X$,
the partial operation $/$ can be described as an isotone map $/_X\colon IUG(X)\to UG(X)$:
for every $[a≤b]\in IUG(X)$, $/_X([a≤b])=a/b$.

Moreover, for every morphism of pseudo D-posets $h\colon X\to Y$, the squares
\begin{equation}
\xymatrix@C=3pc{
IUG(X)
	\ar[r]^{IUG(h)}
	\ar[d]_{/_X}
&
IUG(Y)
	\ar[d]^{/_Y}
\\
UG(X)
	\ar[r]_{UG(h)}
&
UG(Y)
}
\qquad
\xymatrix@C=3pc{
IUG(X)
	\ar[r]^{IUG(h)}
	\ar[d]_{∖_X}
&
IUG(Y)
	\ar[d]^{∖_Y}
\\
UG(X)
	\ar[r]_{UG(h)}
&
UG(Y)
}
\end{equation}
commute. Indeed, this is just a reformulation of the assumption that $h$ is a morphism of pseudo D-posets. 
Therefore, the 
families of $\Pos$-morphisms 
\[
(/_X)_{X\in ob(\PsDPos)}\qquad
(∖_X)_{X\in ob(\PsDPos)}
\]
form a pair of natural transformations
from functor $IUG\colon\PsDPos\to\Pos$ to functor $UG\colon\PsDPos\to\Pos$.
Thus, both $/$ and $∖$ are morphisms in the category of functors $[\PsDPos,\Pos]$ with
source $IUG$ and target $UG$.

Let us focus on the partial operation $/$ (or, as explained in the previous paragraph, a natural
transformation $/$).
Consider the diagram
\begin{equation}
\label{diag:defineslash}
\xymatrix@C=3pc{
IUG(A)
	\ar@<.5ex>[r]^{IUG(f)}
	\ar@<-.5ex>[r]_{IUG(g)}
	\ar[d]_{/_A}
&
IUG(B)
	\ar[r]^{IU(q)}
	\ar[d]^{/_B}
&
IU(Q)
	\ar@{.>}[d]^{/}
\\
UG(A)
	\ar@<.5ex>[r]^{UG(f)}
	\ar@<-.5ex>[r]_{UG(g)}
&
UG(B)
	\ar[r]^{U(q)}
&
U(Q)
}
\end{equation}
Since $f,g$ in diagram~\eqref{diag:absolute} are morphisms of pseudo D-posets,
the naturality of $/$ implies that both the $f$ and $g$ left-hand squares in~\eqref{diag:defineslash}
commute. Since the coequalizer~\eqref{diag:absolute} is absolute,
both the top and the bottom rows in~\eqref{diag:defineslash} are coequalizers in $\Pos$. From the commutativity of
both $f$ and $g$ 
left-hand squares and from the fact that the bottom row is a coequalizer,
it follows that the morphism $U(q)\circ /_B\colon IUG(B)\to U(Q)$ coequalizes
the top pair of parallel arrows. Indeed,
\begin{equation*}
\begin{split}
U(q)\circ/_B\circ IUG(f)&=U(q)\circ UG(f)\circ /_A\\&=U(q)\circ UG(g)\circ /_A\\&= U(q)\circ/_B\circ IUG(g).
\end{split}
\end{equation*}

Since the top row in~\eqref{diag:defineslash} is a coequalizer, there is
a unique arrow $/\colon IU(Q)\to U(Q)$ making the right square commute.
Note that, actually, we equipped the bounded 
poset $Q$ with a partial binary operation $/$, that
is defined for all comparable pairs of elements of $Q$. In an analogous way,
we may define a partial operation $∖$ on $Q$.

Let us prove that these partial operations on $Q$ satisfy the axioms of a
pseudo D-poset. 
For every bounded poset $P$, let $[0≤\_]_P$ denote the mapping from $U(P)$ to
$IU(P)$ given by the rule $x\mapsto[0≤x]$. It is easy to see that this $ob(\BPos)$-indexed
family of arrows forms a natural transformation
from $U$ to $IU$. Moreover, the $/$ half of (PD1) is equivalent with the fact that, for every pseudo D-poset $X$,
the diagram
\begin{equation}
\label{diag:PD1}
\xymatrix@C=3.5pc{
UG(X)
	\ar[r]^{[0\,≤\,\_]_{G(X)}}
	\ar[d]_{\id}
&
IUG(X)
	\ar[ld]^{/_X}
\\
UG(X)
}
\end{equation}
commutes, so we see that $/\circ([0≤\_]*G)=\id_{UG}$ in the category of functors $[\PsDPos,\Pos]$. Similarly,
$∖\circ([0≤\_]*G)=\id_{UG}$.

To prove that the partial operation $/$ on $Q$ satisfies (PD1), consider the diagram
\begin{equation}
\label{diag:PD1transfer1}
\xymatrix@C=3pc{
UG(A)
	\ar@<.5ex>[r]^{UG(f)}
	\ar@<-.5ex>[r]_{UG(g)}
	\ar[d]_{[0\,≤\,\_]_{G(A)}}
&
UG(B)
	\ar[r]^{U(q)}
	\ar[d]^{[0\,≤\,\_]_{G(B)}}
&
U(Q)
	\ar[d]^{[0\,≤\,\_]_Q}
\\
IUG(A)
	\ar@<.5ex>[r]^{IUG(f)}
	\ar@<-.5ex>[r]_{IUG(g)}
	\ar[d]_{/_A}
&
IUG(B)
	\ar[r]^{IU(q)}
	\ar[d]^{/_B}
&
IU(Q)
	\ar[d]^{/}
\\
UG(A)
	\ar@<.5ex>[r]^{UG(f)}
	\ar@<-.5ex>[r]_{UG(g)}
&
UG(B)
	\ar[r]^{U(q)}
&
U(Q)
}
\end{equation}

By the commutativity of~\eqref{diag:PD1}, we see that the left and middle verticals
in~\eqref{diag:PD1transfer1} compose
to $\id_{UG(A)}$ and $\id_{UG(B)}$, respectively. Merging the vertical 
squares gives us the diagram
\begin{equation}
\label{diag:PD1transfer2}
\xymatrix@C=3pc{
UG(A)
	\ar@<.5ex>[r]^{UG(f)}
	\ar@<-.5ex>[r]_{UG(g)}
	\ar[d]_{\id_{UG(A)}}
&
UG(B)
	\ar[r]^{U(q)}
	\ar[d]^{\id_{UG(B)}}
&
U(Q)
	\ar[d]^{/\circ[0\,≤\,\_]_Q}
\\
UG(A)
	\ar@<.5ex>[r]^{UG(f)}
	\ar@<-.5ex>[r]_{UG(g)}
&
UG(B)
	\ar[r]^{U(q)}
&
U(Q)
}
\end{equation}

Note that if we replace the rightmost vertical arrow in~\eqref{diag:PD1transfer2}
by $\id_{U(Q)}$, the diagram still commutes. However, by an analogous
argument we have used to define $/$ on $Q$, the rightmost vertical arrow in
\eqref{diag:PD1transfer2} is unique. Therefore, $/\circ[0≤\_]=\id_{U(Q)}$, 
that means, for all $x\in Q$, $x/0=x$. The other half of (PD1) follows similarly.

Let us prove (PD2). For every poset $P$, let $J(P)$ be a poset consisting of
all comparable triples $[x≤y≤z]$ of $P$, partially ordered by the
rule
\begin{gather*}
[x_1≤y_1≤z_1]≤[x_2≤y_2≤z_2]\\
\Updownarrow\\
x_2≤x_1\text{ and }y_1≤y_2\text{ and }z_1=z_2.
\end{gather*}
For every morphism of posets $f\colon P\to Q$, let us define
$J(f)\colon J(P)\to J(Q)$ pointwise: 
\[
J(f)([x≤y≤z])=[f(x)≤f(y)≤f(z)].
\]
Obviously, $J\colon\Pos\to\Pos$ is
a functor.

For every bounded poset $P$, let $α_P\colon JU(P)\to IIU(P)$ be a map
given by the rule $α_P([x≤y≤z])=[[y≤z]≤[x≤z]]$ and let
$β_P\colon JU(P)\to IU(P)$ be a map given by the rule $β_P([x≤y≤z])=[x≤y]$.
Note that both maps $α_P$ and $β_P$ are isotone. Moreover, the families $α_{\_}$ and
$β_{\_}$ indexed by the objects of $\BPos$ form natural transformations $α\colon JU\to IIU$ and $β\colon JU\to IU$.

We may now express one half of the (PD2) axiom by a commutative diagram; for every pseudo D-poset $X$
the diagram
\begin{equation}
\label{diag:PD2}
\xymatrix@C=2.5pc{
JUG(X)
	\ar[rr]^-{β_{G(X)}}
	\ar[d]_-{α_{G(X)}}
&
&
IUG(X)
	\ar[d]^-{/_X}
\\
IIUG(X)
	\ar[r]_-{I(/_X)}
&
IUG(X)
	\ar[r]_-{∖_X}
&
UG(X)
}
\end{equation}
commutes. Indeed, chasing an element $[x≤y≤z]\in JUG(X)$ around~\eqref{diag:PD2} gives us
\begin{equation}
\xymatrix@C=2.5pc{
[x≤y≤z]
	\ar@{|->}[rr]^-{β_{G(X)}}
	\ar@{|->}[d]_-{α_{G(X)}}
&
&
[x≤y]
	\ar@{|->}[d]^-{/_X}
\\
[[y≤z]≤[x≤z]]
	\ar@{|->}[r]_-{I(/_X)}
&
[(z/y)≤(z/x)]
	\ar@{|->}[r]_-{∖_X}
&
(z/y)∖(z/x)=(y/x)
}
\end{equation}

This shows that, in the category of functors $[\PsDPos,\Pos]$, $/\circ(β*G)=∖\circ(I*/)\circα$.
We may now give a similar argument as we did to prove (PD1) that the partial operations $/,∖$ on $Q$ satisfy
the (PD2) axiom. 

We have proved that the partial operations $/$ and $∖$ we defined on $Q$ satisfy the
axioms of a pseudo D-poset.
In other words, there is a pseudo D-poset $Q'$ such that $Q=G(Q')$.
Moreover, the
morphism $q\colon U(B)\to Q=U(Q')$ of bounded posets preserves $/$,
since the right-hand square of~\eqref{diag:defineslash} commutes. Since $q$ preserves
$∖$ as well, we see that $q=U(q')$ for a morphism of pseudo D-posets $q'\colon B\to Q'$.
With this fact in mind, we may now observe that the diagram~\eqref{diag:defineslash} and
its $∖$-twin mean that $q'\circ f=q'\circ g$ in $\PsDPos$ and since the pseudo D-poset
structure on $Q$ arising from those diagrams is unique, we see that $Q'$ is unique.
Uniqueness of $q'$ follows from the fact that $G$ is a faithful functor.

Let us prove that $q'$ is a coequalizer of the pair $f,g$ in $\PsDPos$.
Let $h\colon B\to C$ be a morphism of pseudo D-posets such that $h\circ f=h\circ g$. Since
the diagram~\eqref{diag:absolute} is a coequalizer, there is a unique morphism of bounded posets
$e\colon G(Q)\to G(C)$ such that $e\circ G(q')=e\circ q=G(h)$. It remains to prove that this $e$ preserves
the partial operations $/$ and $∖$ on $Q$. Consider the diagram
\begin{equation}
\label{diag:final}
\xymatrix{
IUG(B)
	\ar[r]^{IU(q)}
	\ar[d]_{/_B}
&
IU(Q)
	\ar[d]^{/}
	\ar[r]^{IU(e)}
&
IUG(C)
	\ar[d]^{/_C}
\\
UG(B)
	\ar[r]_{U(q)}
&
U(Q)
	\ar[r]_{U(e)}
&
UG(C)
}
\end{equation}

We need to prove that the right-hand square of~\eqref{diag:final} commutes. By the commutativity
of the diagram~\eqref{diag:defineslash}, we already know that
the left hand square of~\eqref{diag:final} commutes. 
As $G(h)=e\circ G(q')$ and $h$ is a morphism of pseudo
D-posets, the outer square of~\eqref{diag:final} commutes. Therefore,
$$
U(e)\circ U(q)\circ /_B=/_C\circ IU(e)\circ IU(q)=U(e)\circ/\circ IU(q)
$$
Since the top row in~\eqref{diag:defineslash} is a coequalizer, $IU(q)$ 
is a coequalizer and thus an epimorphism. This implies that $/_C\circ IU(e)=U(e)\circ /$
and we see that the right-hand square of~\eqref{diag:final} commutes.
\end{proof}
\begin{acknowledgements}
The author is indebted to both referees for valuable comments on earlier drafts
that helped to improve the paper.

This research is supported by grants VEGA 2/0069/16 and 1/0006/19,
Slovakia and by the Slovak Research and Development Agency under the contracts
APVV-18-0052 and APVV-16-0073.
\end{acknowledgements}

\end{document}